\documentclass[10 pt]{amsart}
\usepackage[active]{srcltx}
\usepackage[mathscr]{eucal}
\usepackage{amsmath,amssymb,hyperref}
\usepackage{graphics}
\usepackage{enumerate}

\newtheorem{thm}{Théorème}[section]

\newtheorem{cor}[thm]{Corollaire}
\newtheorem{prop}[thm]{Proposition} 
\newtheorem{lemme}[thm]{Lemme}
\newtheorem{definition}[thm]{D\'efinition}
\newtheorem{rem}[thm]{Remarque}

\def\C{\mathbb C}

\def\R{\mathbb R}

\def\F{\mathcal F}
\input xy
\xyoption{all}
\usepackage[french]{babel}
\usepackage[T1]{fontenc}

\usepackage{amsfonts,eufrak}

\usepackage{amssymb, amsmath}

\frenchspacing

\DeclareMathOperator{\ddbar}{\partial\overline{\partial}}
\newcommand{\field}[1]{\mathbb{#1}}

\newcommand{\RR}{\field{R}}

\newcommand{\CF}{{\mathcal C}_{\F}^+}
\newcommand{\CFN}{{{\mathcal C}_{\F}^+}(1)}
\newcommand{\CFR}{{{\mathcal C}_{\F}^+}_{Res}}
\newcommand{\con}{N_\F^*}
\newcommand{\norm}{N_\F}

\begin{document}
 
\title[Feuilletages holomorphes]{ Feuilletages holomorphes admettant une mesure  transverse invariante}
\vskip 10 pt





\selectlanguage{french}

\author[FR\'ED\'ERIC TOUZET]{FR\'ED\'ERIC TOUZET$^*$}
\thanks{$^*$ IRMAR, Campus de Beaulieu 35042 Rennes Cedex, France, frederic.touzet@univ-rennes1.fr}

\begin{abstract} 
Soit $\mathcal F$ un feuilletage holomorphe régulier de codimension 1 sur une variété kählerienne compacte. On suppose que $\mathcal F$ admet un courant positif invariant par holonomie.
Le but de cette note est d'établir l'alternative suivante:

- Il existe une hypersurface  invariante par le feuilletage.

- Le feuilletage admet une métrique hermitienne transverse de courbure constante invariante par holonomie.
\\
 
\noindent {\sc Abstract.}  Let $\F$ be a regular codimension 1 holomorphic foliation on a compact K\" ahler manifold. One assumes in addition that $\F$ possesses a transverse invariant positive current. The aim  of this paper is to establish the following alternative:

- There exists an invariant hypersurface.

- The foliation admits a transverse invariant hermitian metric with constant curvature.

\end{abstract}

\maketitle
\hskip 20 pt{\bf Keywords:} Holomorphic foliations, invariant current, invariant metric\\

\hskip 20 pt{\bf Mathematical Subject Classification:} 37F75\\
\section{Introduction}

Soit $\F$ un feuilletage régulier sur une variété lisse. Rappelons qu'une mesure transverse invariante par ce feuilletage est définie par la donnée d'une mesure sur les transversales invariante par le pseudo-groupe d'holonomie de $\F$.\footnote{Toutes les mesures considérées ici sont boréliennes, positives et de masse localement finie.} Du point de la vue de la régularité, on peut considérer deux situations extrêmes: celle où la mesure est atomique (en l'occurence ici concentrée sur une feuille) et celle où elle est donnée comme forme volume d'une métrique riemannienne transverse invariante. 

Même quand la variété ambiante est compacte, l'existence de feuilletages mesurés et {\it a fortiori} de feuilletages transversalement riemanniens reste un phénomène rare. Dans le cas très particulier décrit par le théorème qui suit, nous nous proposons  de montrer que ces deux aspects, riemanniens et mesurés, sont essentiellement associés à la même classe de feuilletage pour peu que le support de la mesure soit suffisamment "épais".

\vskip 10 pt

\textbf{Théorème Principal}

{\it Soit $\F$ un feuilletage holomorphe de codimension un sur une variété Kähler compacte $X$; supposons que $\F$ admette une mesure diffuse (i.e sans atomes) invariante, alors $\F$ est transversalement hermitien. De plus, quitte à effectuer un revêtement ramifié au dessus d'hypersurfaces invariantes par $\F$, on peut choisir la métrique transverse à courbure constante.}

\vskip 10 pt

Un tel phénomène a été plus généralement conjecturé par \'Etienne Ghys pour les feuilletages {\it transversalement} holomorphes de codimension $1$ (\cite{ghys}) et prouvé dans le cas transversalement affine dans {\it loc.cit}. Comme le lecteur pourra le constater, la situation que nous regardons ici permet de "gommer" toutes les difficultés d'ordre dynamique posées par ce problème. Par exemple, il est facile de constater qu'un feuilletage transversalement affine sur une variété Kähler compacte $X$ est automatiquement transversalement euclidien. Ceci résulte du fait que tout fibré en droites plat $L$ sur $X$ admet une unique connexion plate hermitienne vis à vis de laquelle toute forme holomorphe à valeurs dans $L$ est fermée.


\section{Courants invariants par holonomie}
\subsection{Courants invariants}

Suivant \cite{sullivan}, il y a une correspondance naturelle entre mesures et courants positifs invariants par un  feuilletage. Typiquement, une mesure atomique peut se représenter en terme de courant d'intégration le long d'une feuille compacte. Essentiellement pour des raisons d'ordre cohomologique, il sera plus commode d'adopter le langage des courants invariants dont nous rappelons la définition (dans notre contexte).
\begin{definition}\label{courantinvariant}
Soit $\mathcal F$ un feuilletage holomorphe de codimension 1 (\'eventuellement singulier) sur une vari\'et\'e $X$ complexe et $T$ un courant positif ferm\'e de bidegr\'e $(1,1)$ d\'efini sur $X$. 
Par la suite, $X$ sera supposée K\"ahler compacte. On désigne par $\theta$ une forme de Kähler fixée une fois pour toutes.
 On dira que $T$ est $\mathcal F$-invariant (ou invariant par holonomie de $\mathcal F$) si au voisinage de tout point de $X$, on a
       $$T\wedge \omega=0$$
o\`u $\omega$ d\'esigne une 1 forme holomorphe d\'efinissant localement le feuilletage.
\end{definition}

Au voisinage d'un point ou le  feuilletage o\`u celui-ci est d\'ecrit par l'équation $\{dz=0\}$, un tel courant s'exprime donc sous la forme $T=ia(z)dz\wedge d\overline{z}$ o\`u $a$ est une mesure positive. De façon équivalente, $T=i\ddbar \varphi (z)$ où le potentiel local $\varphi $ peut être choisi comme fonction plurisousharmonique ({\it psh}) ne dépendant que de la seule variable $z$, ce qui revient à dire que $\varphi$ est constant sur les feuilles.

L'ensemble des courants positifs $\F$-invariants forme un cône qui sera noté $\CF$.

\begin{definition}

 Quand $T\in \CF$ est une forme lisse semi-positive qui ne s'annule pas, on dira que $T$ est (la forme volume d') une métrique hermitienne transverse au feuilletage.  Dans cette situation,  on peut écrire localement
 
 $$T=\frac{i}{\pi} e^{2\psi (z)}{dz}\wedge{\overline d{z}}$$
 
 où $z$ est une coordonnée transverse et lui associer sa forme de courbure 
 
 $$\rho_T=-\frac{i}{\pi} \ddbar \psi.$$
 
 On dira alors que $\F$ est transversalement 
 \begin{itemize}
 \item  {\bf euclidien} si $\rho_T=0$.
\item {\bf hyperbolique} si $\rho_T=-\eta_T$.
\item{\bf sphérique (ou elliptique)} si $\rho_T=\eta_T$.
\end{itemize}
\end{definition}

Chaque courant $T\in \CF$ représente une classe de cohomologie qui sera notée $\{T\}$.

\begin{definition}\label{defpsef}La classe de cohomologie $\alpha\in H^{1,1}(X,\RR)$ est dite {\bf pseudo-effective} si $\alpha$ peut \^etre repr\'esent\'ee par un courant positif ferm\'e de bidegr\'e $(1,1)$.

On dira alors qu'un fibr\'e  en droites holomorphe $L$ est {\bf pseudo-effectif} si $c_1(L)$ est pseudo-effective ou, de fa\c con \'equivalente,  si l'on peut implanter sur $L$ une m\'etrique $h(x,v)= {|v|}^2e^{-2\varphi (x)}$ o\`u le poids local $\varphi$ est une fonction plurisousharmonique. 

Le courant $T$ est alors \'egal \`a la forme de courbure d'une telle m\'etrique (\'eventuellement singuli\`ere) suivant la formule:
\begin{equation}\label{courbure}
 T=\frac{-i}{2\pi}\partial\overline{\partial}\log h=\frac{i}{\pi}\partial\overline{\partial}\varphi.
\end{equation}
\end{definition}

Nous ferons un usage répétitif du théorème de l'indice de Hodge:

\begin{thm} \label{Hodge}
Considérons  la forme bilinéaire symétrique $q$ définie sur $H^{1,1}(X,\mathbb R)$ par

  $$q(c,c')= c. c'.{\{\theta\}}^{n-2},$$


Soit $p=\mbox{dim}_{\mathbb R} H^{1,1}(X,\mathbb R)$. Alors la forme $q$ a $(1,p-1)$ comme signature.
\end{thm}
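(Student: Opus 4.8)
Le plan est de s'appuyer sur deux r\'esultats classiques, tous deux valables sur une vari\'et\'e k\"ahlerienne compacte : le th\'eor\`eme de Hodge (existence de repr\'esentants harmoniques, compatibles avec la bigraduation de Hodge, la structure r\'eelle et la primitivit\'e) et la relation bilin\'eaire de Hodge--Riemann en degr\'e $2$. Posons $n=\dim_{\CC}X$. La premi\`ere \'etape est la d\'ecomposition primitive de $H^{1,1}(X,\RR)$ : on consid\`ere la forme lin\'eaire $\phi(c)=c\cdot\{\theta\}^{n-1}$, qui est non nulle car $\phi(\{\theta\})=\int_X\theta^n>0$ ; on obtient donc
$$H^{1,1}(X,\RR)=\RR\{\theta\}\oplus P,\qquad P:=\ker\phi=\{c\mid c\cdot\{\theta\}^{n-1}=0\},$$
avec $\dim P=p-1$. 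Cette somme directe est $q$-orthogonale, puisque $q(\{\theta\},c)=c\cdot\{\theta\}^{n-1}=\phi(c)=0$ d\`es que $c\in P$.

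La deuxi\`eme \'etape consiste \`a \'evaluer $q$ sur chacun des deux facteurs. Sur la droite $\RR\{\theta\}$ on a $q(\{\theta\},\{\theta\})=\int_X\theta^n>0$ : la forme $q$ y est d\'efinie positive. Sur $P$, il s'agit de montrer qu'elle est d\'efinie n\'egative. Soit $c\in P$ non nul ; le th\'eor\`eme de Hodge fournit un repr\'esentant harmonique de $c$, n\'ecessairement une $(1,1)$-forme r\'eelle $\alpha$, primitive puisque $c\in\ker\phi$. La relation bilin\'eaire de Hodge--Riemann en degr\'e $2$ affirme alors $-\int_X\alpha\wedge\bar\alpha\wedge\theta^{n-2}>0$ ; comme $\alpha$ est r\'eelle on a $\bar\alpha=\alpha$, d'o\`u $q(c,c)=\int_X\alpha\wedge\alpha\wedge\theta^{n-2}<0$. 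On en conclut que la signature de $q$ est $(1,\dim P)=(1,p-1)$. En particulier $q$ est non d\'eg\'en\'er\'ee, ce qui n'est autre que le th\'eor\`eme de Lefschetz difficile en degr\'e $2$ (l'isomorphisme $L^{n-2}\colon H^{1,1}(X,\RR)\to H^{n-1,n-1}(X,\RR)$, $c\mapsto c\cdot\{\theta\}^{n-2}$, coupl\'e \`a la dualit\'e de Poincar\'e).

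Le seul point r\'eellement d\'elicat est la relation de Hodge--Riemann elle-m\^eme, unique ingr\'edient analytique non formel de l'argument ; tout le reste rel\`eve de l'alg\`ebre lin\'eaire, une fois admis le th\'eor\`eme de Hodge. Le cas $n=2$ — la forme d'intersection d'une surface k\"ahlerienne compacte, de signature $(1,h^{1,1}-1)$ — est le mod\`ele classique de l'\'enonc\'e, mais en dimension quelconque la pr\'esence du facteur $\{\theta\}^{n-2}$ rend indispensable l'appel \`a Hodge--Riemann (ou, de fa\c con \'equivalente, \`a la d\'ecomposition de Lefschetz primitive sur les formes harmoniques).
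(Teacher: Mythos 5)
Votre d\'emonstration est correcte. Le texte ne d\'emontre pas cet \'enonc\'e : il l'invoque comme r\'esultat classique (th\'eor\`eme de l'indice de Hodge), et l'argument que vous donnez --- d\'ecomposition $q$-orthogonale $H^{1,1}(X,\RR)=\RR\{\theta\}\oplus P$ o\`u $P$ est le noyau de $c\mapsto c\cdot\{\theta\}^{n-1}$, positivit\'e sur la droite $\RR\{\theta\}$, n\'egativit\'e sur la partie primitive via les relations bilin\'eaires de Hodge--Riemann en degr\'e $2$ --- est pr\'ecis\'ement la preuve standard. Le seul point laiss\'e implicite est le passage de la primitivit\'e cohomologique ($c\cdot\{\theta\}^{n-1}=0$) \`a la primitivit\'e ponctuelle du repr\'esentant harmonique $\alpha$ : il r\'esulte de ce que l'op\'erateur $L=\theta\wedge\cdot$ commute avec le laplacien sur une vari\'et\'e k\"ahl\'erienne compacte, de sorte que $\alpha\wedge\theta^{n-1}$ est une forme harmonique de degr\'e maximal d'int\'egrale nulle, donc identiquement nulle.
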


   Nous serons plus particulièrement amené à étudier les propriétés du fibré {\bf normal} du feuilletage  ${N}_{\mathcal
          F}={TX\over {\mathcal F}}$ ($\F$ étant vu ici comme sous-fibré de corang $1$ du fibré tangent $TX$) ou par dualité celles de son fibré {\bf conormal} $\con$.

   Considérons
un recouvrement ouvert $(U_i)_{i\in I}$ de $X$ suffisamment fin
tel que sur chaque ouvert de la famille, les feuilles de ${\mathcal
F}$ soient données par les niveaux d'une submersion holomorphe

                $$f_i:U_i\rightarrow{\mathbb C}.$$

Sur chaque intersection $U_i\cap U_j$, les différentielles des
$f_i$ sont liées par la relation

              $$df_i=g_{ij}df_j,\ g_{ij}\in {\mathcal O}^*(U_i\cap U_j)$$
où le cocycle multiplicatif $g_{ij}$ représente le fibré normal du
feuilletage $N_{\mathcal \F}$.

Le faisceau ${\mathcal \F}_{\infty}$ des germes de $(1,0)$ formes
différentielles lisses tangentes ${\mathcal \F}$ est fin, ce qui assure
l'existence de sections $\omega_i\in{\mathcal \F}_{\infty}(U_i)$
vérifiant sur $U_i\cap U_j$:

            $$\omega_i-\omega_j={dg_{ij}\over g_{ij}}.$$

Puisque les $\omega_i$ sont de la forme
                                           $$g_idf_i$$
où $g_i\in {\mathcal C}^\infty (U_i)$, on obtient par différentiation
que
\begin{equation}\label{chernclass}
       \eta=d\omega_i=dg_i\wedge df_i
\end{equation} 
   est bien définie  par recollement en tant que  $2$ forme fermée et représente à un facteur près la classe de Chern  $c_1(N_{\F})$.  On observe de plus que  $\eta\wedge\eta=0$. En particulier, on obtient (propriété d'annulation de Bott) que
   
 \begin{equation}\label{annulationdebott}
 c_1^2(N_\F)=0
 \end{equation}

\begin{lemme}\label{intersectionnulle}
Let $S,T\in \CF$, alors $\{S\}\{T\}=0$. En particulier, ${\{S\}}^2=0$. De plus les classes $\{S\}$ et $\{T\}$ sont colinéaires.
 


\begin{proof}

Soit  $(U_i)_{i\in I}$ un recouvrement ouvert de  $X$ choisi de telle sorte $T=\frac{i}{\pi}\partial\overline{\partial}\varphi_i$,  où $\varphi_i$ {\it psh} localement  constant sur les feuilles.. Sur les intersections $U_i\cap U_j$, on a  $\partial\varphi_i-\partial\varphi_j=\omega_{ij}$ où $\omega_{ij}$ est une $1$ forme holomorphe définie sur $U_i\cap U_j$ s'annulant sur les feuilles et qu'on peut donc identifier à une section locale du fibré conormal $N_\F^*$. Soit $\xi$ une $(1,1)$  forme lisse fermée telle que $\{\xi\}=\{T\}$. Quitte à prendre un recouvrement plus fin, on obtient donc que sur chaque $U_i$, $\xi=\frac{i}{\pi}\partial\overline{\partial}u_i$, $u_i\in{\mathcal C}^\infty (U_i)$ tel que $\partial u_i-\partial u_j=\omega_{ij}$. Par suite, la collection des $\frac{i}{\pi}\partial u_i\wedge S$, pour $S\in\CF$, produit par recollement une $(1,2)$ forme dont la différentielle est $\eta\wedge S$. Le premier point du  lemme est donc établi. La colinéarité est alors une conséquence immédiate du théorème \ref{Hodge}.
\end{proof}

\end{lemme}

\begin{lemme}\label{classesproportionnelles}
Soit $T\in\CF\setminus\{0\}$; alors il existe $\lambda\in \R$ tel que $c_1(\norm)=\lambda \{T\}$.

\end{lemme}

\begin{proof}
D'après (\ref{chernclass}), on a $T\wedge \eta=0$. Au niveau cohomologique, ceci implique que $\{T\}.c_1(\norm)=0$. On conclut en combinant le lemme \ref{intersectionnulle} et la propriété d'annulation  (\ref {annulationdebott}) avec le théorème de l'indice \ref{Hodge}.
\end{proof}





\begin{cor}
Soit $T\in \CF \setminus\{0\}$, alors $\norm$ ou $\con$ est pseudo-effectif.
\end{cor}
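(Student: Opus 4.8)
Le plan est de déduire cet énoncé directement du Lemme \ref{classesproportionnelles}. D'abord, j'appliquerais ce lemme au courant $T\in\CF\setminus\{0\}$: il fournit un réel $\lambda$ tel que $c_1(\norm)=\lambda\{T\}$, et donc aussi $c_1(\con)=-c_1(\norm)=(-\lambda)\{T\}$, puisque $\con$ est le dual de $\norm$.

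Ensuite, j'observerais que pour tout réel $\mu\geq 0$ la classe $\mu\{T\}$ est pseudo-effective au sens de la Définition \ref{defpsef}: elle est en effet représentée par le courant $\mu T$, qui reste positif et fermé (le cas $\mu=0$ correspondant au courant nul). Il ne resterait plus qu'à discuter suivant le signe de $\lambda$. Si $\lambda\geq 0$, alors $c_1(\norm)=\lambda\{T\}$ est pseudo-effective et $\norm$ est un fibré pseudo-effectif. Si $\lambda\leq 0$, alors $c_1(\con)=(-\lambda)\{T\}$ avec $-\lambda\geq 0$ est pseudo-effective et c'est $\con$ qui est pseudo-effectif. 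Dans tous les cas, l'un des deux fibrés $\norm$, $\con$ est pseudo-effectif, ce qui conclut.

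Aucune difficulté sérieuse n'est à prévoir ici: le corollaire est une conséquence purement formelle de la colinéarité $c_1(\norm)=\lambda\{T\}$ obtenue au Lemme \ref{classesproportionnelles}, le seul point à vérifier étant la stabilité de la pseudo-effectivité par multiplication par un scalaire positif, immédiate sur les représentants en courants positifs.
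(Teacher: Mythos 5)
Votre démonstration est correcte et correspond exactement à l'argument implicite du papier, qui énonce ce corollaire sans preuve juste après le Lemme \ref{classesproportionnelles} : on applique la colinéarité $c_1(\norm)=\lambda\{T\}$ et on discute selon le signe de $\lambda$, la classe $|\lambda|\{T\}$ étant représentée par le courant positif fermé $|\lambda|T$. Rien à redire.
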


\section{Existence de métriques transverses invariantes}\label{métriquestransverses}

Dans la section précédente, nous avons vu que l'existence d'un courant positif invariant  par holonomie du feuilletage $\F$ implique certaines propriétés numériques de son fibré normal: celui-ci ou son dual sont pseudo-effectifs. Comme l'atteste ce qui suit, la situation $\con$ pseudo-effectif entraîne nécessairement l'existence d'une métrique invariante, sans supposer {\it a priori} la présence d'un courant positif invariant. 

{\bf Notations} - Nous désignerons par $PSH(U)$ l'ensemble des fonctions {\it psh} sur une variété complexe $U$
\vskip 5 pt

\subsection{Le cas $c_1(\norm)=0$}\label{c1=0}

\begin{thm}
Soit $\F$ un feuilletage holomorphe régulier de codimension un sur $X$ Kähler compacte. On suppose que $c_1( \norm)=0$; alors $\F$ admet une métrique euclidienne transverse invariante par holonomie.  
\end{thm}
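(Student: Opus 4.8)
The plan is to produce the invariant transverse metric directly from a flat unitary metric on the conormal bundle $\con$, the crucial point being that on a compact Kähler manifold such a metric is automatically constant along the leaves of $\F$. First, since $c_1(\norm)=0$ in $H^2(X,\RR)$, this class is torsion in $H^2(X,\ZZ)$, and on a compact Kähler manifold every holomorphic line bundle with torsion first Chern class carries a flat hermitian metric with unitary monodromy (Hodge theory identifies $H^1(X,\RR)$ with $H^1(X,\mathcal O_X)$, so $\mathrm{Pic}^0(X)$ consists of unitary flat bundles). Apply this to $\norm$, hence to $\con$, and write the resulting metric in the holomorphic frame $df_i$ of $\con$ as $\|df_i\|^2=e^{-2\psi_i}$. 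The cocycle relation $df_i=g_{ij}df_j$ forces $\psi_i-\psi_j=-\log|g_{ij}|$ on $U_i\cap U_j$, and flatness of the metric is exactly the condition $\ddbar\psi_i=0$, so each $\psi_i$ is pluriharmonic.

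\emph{Production of a global form.} From $\psi_i-\psi_j=-\mathrm{Re}\log g_{ij}$ one gets $\partial\psi_i-\partial\psi_j=-\tfrac12\,dg_{ij}/g_{ij}=-\tfrac12(\omega_i-\omega_j)$, so $\alpha:=\partial\psi_i+\tfrac12\omega_i$ is a globally defined $(1,0)$-form on $X$. Using $d\omega_i=\eta$ and $\ddbar\psi_i=0$ we get $d(2\alpha)=\eta$, and since $\eta=d\omega_i$ with $\omega_i$ of type $(1,0)$ has no $(0,2)$-component this splits as $\partial(2\alpha)=\eta^{2,0}$ and $\bar\partial(2\alpha)=\eta^{1,1}=\bar\partial g_i\wedge df_i$. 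Let $\omega_\F\in H^0(X,\Omega^1_X\otimes\norm)$ be the tautological section (locally $df_i\otimes(\partial/\partial f_i)$, i.e. the inclusion $\con\hookrightarrow\Omega^1_X$ read as a section of $\norm\otimes\Omega^1_X$), and let $\nabla$ be the Chern connection of the flat metric above on $\norm$. A direct computation in the frame $(\partial/\partial f_i)$ gives $d^\nabla\omega_\F=2\,\alpha\wedge\omega_\F$. This is of type $(2,0)$, and locally $\bar\partial(\alpha\wedge df_i)=\bar\partial\alpha\wedge df_i=\tfrac12\,\eta^{1,1}\wedge df_i=0$, so $d^\nabla\omega_\F$ is a holomorphic $\norm$-valued $2$-form.

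\emph{The main step.} I expect the real obstacle to be showing that this form vanishes, i.e. that the flat metric is constant on leaves. The form $d^\nabla\omega_\F$ is $d^\nabla$-exact by construction; since $\norm$ carries a flat unitary structure and $X$ is compact Kähler, the Kähler identities hold for $\nabla$ (the curvature term vanishes), so holomorphic sections of $\Omega^p_X\otimes\norm$ are $d^\nabla$-harmonic, and a $d^\nabla$-harmonic form that is $d^\nabla$-exact is zero. Hence $\alpha\wedge\omega_\F=0$, i.e. $\alpha$ is pointwise proportional to $df_i$; therefore $\partial\psi_i=\alpha-\tfrac12\omega_i$ is proportional to $df_i$, so it vanishes on $T\F$, and since $\psi_i$ is real this forces $d\psi_i$ to vanish on $T\F$: each $\psi_i$ is constant along the leaves.

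\emph{Conclusion.} Being pluriharmonic and constant on leaves, $\psi_i=\mathrm{Re}\,h_i$ for a holomorphic $h_i$ constant on the leaves, so $\tilde f_i:=\int e^{h_i}\,df_i$ is a holomorphic submersion defining $\F$ on $U_i$, with $d\tilde f_i=\lambda_{ij}\,d\tilde f_j$ where $\lambda_{ij}=e^{h_i}g_{ij}e^{-h_j}$ satisfies $|\lambda_{ij}|=e^{\psi_i-\psi_j+\log|g_{ij}|}=1$ and is locally constant. Then $T:=\tfrac{i}{\pi}\,d\tilde f_i\wedge d\overline{\tilde f_i}$ is well defined, smooth, semi-positive, nowhere vanishing, closed, and satisfies $T\wedge\omega=0$, hence $T\in\CF$; in the transverse coordinate $z=\tilde f_i$ it reads $\tfrac{i}{\pi}\,dz\wedge d\bar z$, so $\rho_T=0$ and $\F$ is transversally euclidean. (The torsion of $c_1(\norm)$ in the first step is harmless; one could alternatively pass to a finite étale cover to make it vanish.)
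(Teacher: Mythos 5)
Votre démonstration est correcte et suit essentiellement la même route que celle du texte : dans les deux cas on munit $\norm$ d'une structure plate unitaire (possible car $c_1(\norm)=0$ sur une variété kählérienne compacte), puis on annule la $2$-forme holomorphe à valeurs dans $\norm$ qui mesure la variation du poids plat le long des feuilles — votre argument \emph{forme harmonique exacte $\Rightarrow$ nulle} n'est que la reformulation hodgienne du calcul de Stokes et de positivité $\int_X d(\Omega_j\wedge d\overline{\Omega_j})\wedge\theta^{n-2}=0$ du texte. Le cocycle localement constant unimodulaire obtenu pour les intégrales premières renormalisées fournit alors la métrique euclidienne invariante dans les deux rédactions.
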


\begin{proof}
Comme auparavant , on écrit que le feuilletage est défini par des submersions locales 

 $$f_i:U_i\rightarrow{\mathbb C}.$$

avec les conditions de recollement 

              $$df_i=g_{ij}df_j$$
ou  $(g_{ij})$ représente $N_{\mathcal \F}$. 

Dans le cas présent, on peut donc choisir des unités locales $u_i\in  {\mathcal O}^*(U_i)$ telles que 

$$|h_{ij}|=1$$

où $h_{ij}= {u_i}^{-1}u_j g_{ij}$ . On obtient donc que 

$$  \Omega_i=h_{ij}\Omega_j$$

où $\Omega_i=u_idf_i$ est définie sur $U_i$.  On hérite ainsi une $(1,2)$ forme globalement définie $\alpha$ qui vaut $\Omega_j\wedge d\overline{\Omega_j}$ en restriction à $U_j$. 
Soit $\theta$ une forme de Kähler sur $X$. Le théorème de Stockes nous donne 

$$  \int_X d\alpha \wedge \theta^{n-2}=0.$$
Puique  $-d\alpha=d\Omega_j\wedge d\overline{\Omega_j}$ est une $(2,2)$ forme positive, on conclut qu'elle est en fait nulle. Par suite $\eta=i\Omega_j\wedge d\overline {\Omega_j}$ est une métrique euclidienne $\F$-invariante.
\end{proof}

\subsection {Le cas $\con$ pseudo-effectif}\label{conpseudo}

\begin{thm}
Soit $\F$ un feuilletage holomorphe régulier sur $X$ Kähler compacte. On suppose que $\con$ est pseudo-effectif et  $c_1(\norm)\not=0$; alors $\F$ admet une métrique hyperbolique transverse invariante par holonomie.  
\end{thm}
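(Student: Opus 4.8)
The plan is to realize the desired metric as the curvature form of a well-chosen metric on the conormal bundle $\con$ — which here plays the role of the transverse canonical bundle of $\F$ — and then to normalize it to constant curvature. Since $\con$ is pseudo-effective, fix a possibly singular hermitian metric on it with local plurisubharmonic weights $\varphi_i$: on overlaps $\varphi_i-\varphi_j$ is pluriharmonic, and $\partial\varphi_i-\partial\varphi_j$ is a holomorphic section of $\con$ over $U_i\cap U_j$ (it is proportional to $dg_{ij}/g_{ij}$, hence to $df_i$, where the $f_i$ are local submersions defining $\F$ and $(g_{ij})$ is the cocycle of $\norm$). Put $S=\tfrac{i}{\pi}\ddbar\varphi_i\ge 0$: this is a closed positive $(1,1)$-current with $\{S\}=c_1(\con)=-c_1(\norm)$, nonzero by hypothesis.

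The central point is that $S$ is $\F$-invariant, i.e. $S\in\CF$. By Bott's identity \eqref{annulationdebott} one has $\{S\}^2=c_1(\norm)^2=0$. Regularizing $S$ following Demailly by smooth forms $S_\varepsilon$ with $S_\varepsilon\ge-\varepsilon\theta$ and using the index theorem \ref{Hodge}, the positive measures $(S_\varepsilon+\varepsilon\theta)\wedge(S_\varepsilon+\varepsilon\theta)\wedge\theta^{n-2}$ have total mass $(\{S\}+\varepsilon\{\theta\})^{2}\cdot\{\theta\}^{n-2}\to 0$, whence $S\wedge S=0$ and the complex Hessian of each $\varphi_i$ has rank at most one. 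Its rank-one null distribution is a codimension-one holomorphic foliation, which the cocycle relation $\partial\varphi_i-\partial\varphi_j\in\con$ forces to coincide with $\F$; hence, up to subtracting a pluriharmonic function, $\varphi_i$ depends only on $f_i$. So $S\in\CF$, and Lemma \ref{classesproportionnelles} then gives back $c_1(\norm)=-\{S\}$.

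It remains to upgrade the (possibly singular) invariant current $S=\tfrac{i}{\pi}\ddbar\varphi(z)$, with $\varphi$ leafwise-constant and subharmonic in the transverse coordinate $z$, to a genuine transverse metric of constant curvature. On a transversal equipped with its holonomy pseudo-group, $\varphi$ is a holonomy-invariant subharmonic function, not harmonic since $\{S\}\ne 0$; this excludes the euclidean case $\rho=0$, and — it being $\con$ and not $\norm$ that is pseudo-effective — the spherical case as well, so the transverse structure is hyperbolic. Solving the transverse Liouville equation $\ddbar\psi=e^{2\psi}\,dz\wedge d\overline z$ in a holonomy-invariant fashion — equivalently, pulling back the Poincar\'e metric through the developing map of that transverse structure — produces a smooth, nowhere-vanishing, $\F$-invariant $(1,1)$-form $\sigma$ with $\rho_\sigma=-\eta_\sigma$, which is the desired transverse hyperbolic metric.

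I expect the invariance step to be the genuine obstacle. Giving $S\wedge S$ a meaning when the weights $\varphi_i$ are merely psh (hence possibly $-\infty$ on a pluripolar set) requires care, and a possible divisorial part of $S$ — a locally finite sum of integration currents along $\F$-invariant hypersurfaces — must be split off first, or shown not to occur under the present hypotheses (which is presumably where the regularity of $\F$ enters), before the rank-one analysis can be run on the diffuse part. A secondary difficulty lies in the Liouville step: one must ensure the holonomy-invariant solution is the complete smooth hyperbolic metric rather than a degenerate competitor, and in particular that it acquires no cusp-type singularities incompatible with being nowhere zero.
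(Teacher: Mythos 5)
Your overall strategy (put a singular psh-weighted metric on $\con$, show the associated current is $\F$-invariant, then solve a transverse Liouville equation) points in the right direction, but both of your key steps diverge from what actually works, and the second one is a genuine gap. On the invariance step: your route through $\{S\}^2=0\Rightarrow S\wedge S=0\Rightarrow$ rank-one Hessian is exactly the kind of argument you yourself flag as delicate, and it is: $S\wedge S$ is not defined for a general positive current with unbounded psh potentials, Demailly regularization on a compact K\"ahler manifold loses positivity in proportion to the Lelong numbers (so $S_\varepsilon\ge-\varepsilon\theta$ is not available when $S$ has a divisorial part), and even granting $S\wedge S=0$, passing from a degenerate complex Hessian of a merely psh function to a holomorphic null foliation equal to $\F$ is unjustified. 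The paper sidesteps all of this with Brunella's argument applied to the \emph{dual} object: the metric of weight $-\varphi$ on $\norm$ is the global $(1,1)$-form $\eta_T=\frac{i}{\pi}e^{2\varphi}\,\omega\wedge\overline{\omega}$ with $L^\infty_{loc}$ coefficients (no integrability issue, since $\varphi$ is locally bounded above), and the current
$$\Omega=-\partial\overline{\partial}\bigl(\eta_T\wedge\theta^{n-2}\bigr)=-\partial\overline{\partial}\bigl(e^{2\varphi+2\log|f|}\bigr)\wedge\tfrac{i}{\pi}dz\wedge d\overline{z}\wedge\theta^{n-2}$$
is a \emph{positive} measure (the exponential of a psh function is psh) which is exact, hence zero; this forces $e^{2\varphi+2\log|f|}$ to be leafwise pluriharmonic, hence leafwise constant, so both $T$ and $\eta_T$ lie in $\CF$. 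This is shorter, needs no splitting off of a divisorial part, and is the argument you should use.

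The more serious gap is your final step. Having an invariant positive current $S=\frac{i}{\pi}\ddbar\varphi(z)$ in the class $c_1(\con)$ does \emph{not} give you a transverse hyperbolic structure with a developing map; "pulling back the Poincar\'e metric through the developing map of that transverse structure" presupposes exactly the conclusion you are trying to reach, and "solving the Liouville equation in a holonomy-invariant fashion" is an assertion, not a construction --- the holonomy pseudogroup is not a priori a pseudogroup of isometries of anything. The paper's resolution is a fixed-point argument: the assignment $\beta(T)=\eta_T$ (suitably normalized) maps the set $\CFN=\{T\in\CF:\{T\}=c_1(\con)\}$ into itself; this set is convex and weakly compact, and $\beta$ is shown to be weakly continuous using H\"ormander's compactness theorem for psh functions together with dominated convergence. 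Leray--Schauder--Tychonoff then produces a fixed point $T=\eta_T$, and the fixed-point equation $\frac{i}{\pi}\ddbar\varphi=\frac{i}{\pi}e^{2\varphi}\omega\wedge\overline{\omega}$ is precisely the statement that $\eta_T$ is a transverse invariant metric of constant negative curvature. Without something playing the role of this fixed-point (or an equivalent PDE/uniformization) argument, your proof stops at "there is an invariant positive current in the class $c_1(\con)$" and never reaches the smooth hyperbolic metric.
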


\begin{proof}
Par définition, il existe  un courant positif fermé $T$ dont la classe de cohomologie $\{T\}$ est égale à $c_1(\con)$. A priori, il n'y a pas de raisons pour que $T\in \CF$. 

Soit $h$ une métrique sur $\con$ dont le poids local $\varphi$ est un potentiel local {\it psh} de $T$. 
Par dualité, on récupère donc une métrique sur $\norm$ une métrique de poids $-\varphi$ définie par la $(1,1)$ forme positive globale à coefficients  $L_{loc}^\infty$

$$\eta_T=\frac{i}{\pi}e^{2\varphi} \omega\wedge{\overline \omega}$$

\noindent où $\omega$ est une $1$ forme holomorphe non dégénérée définissant localement $\F$.

Puisque toute  fonction pluriharmonique sur $X$ est constante, le choix de $\eta_T$ (pour $T$ donné) est unique à multiplication près par une constante positive. Elle est donc canoniquement associé à $T$ si l'on fixe la normalisation 

$$\int_X\eta_T\wedge {\theta}^{n-1}=\int_X T\wedge{\theta}^{n-1}.$$


Ainsi que l'a observé Marco Brunella ( cf.\cite{brsurf}, lemme 10  ), la forme $\eta$ est en fait invariante par holonomie du feuilletage, ce qui revient à dire qu'elle est fermée (au sens des courants). Rappelons la démonstration de ce dernier point: 
Considérons le courant $\Omega=-\partial\overline{\partial}(\eta_T\wedge {\theta}^{n-2})$ où $n$ est la dimension complexe de $M$.
Au voisinage d'un point non situé dans l'ensemble singulier $\mbox {Sing}\ \mathcal F$ du feuilletage, on peut écrire en coordonnées holomorphes locales $\omega=fdz$ où $f$ est holomorphe inversible. Sur $U=M\setminus \mbox{Sing}\ \mathcal F $, on obtient donc que $$\Omega=- \partial\overline{\partial}(e^{2\varphi+2\log{|f|}})dz\wedge d\overline{z}\wedge{\theta}^{n-2}$$
 est positif (l'exponentielle d'une fonction {\it psh} est {\it psh}).
  On obtient alors que $\Omega$ est une mesure {\it positive} sur $X$ tout entier,  puis que $\Omega=0$ par exactitude. 
  
  On peut donc en déduire qu'au voisinage d'un point régulier, $e^{2\varphi+2\log{|f|}}$ est {\it pluriharmonique} dans les feuilles et finalement constante car l'exponentielle d'une fonction {\it psh} est pluriharmonique si et seulement si cette fonction est {\it constante}. Puisqu'on a montré que $\varphi +\log{|f|}$ ne dépend que de la variable $z$ (qui paramètre l'espace des feuilles) on peut donc conclure que $T$ et $\eta_T$ sont deux éléments de $\CF$. Posons $\CFN:=\{T\in \CF| \{T\}=c_1(\con)\}$ et considérons l'application 
  
 $$ \begin{array}{lrcl}
\beta : & \CFN & \longrightarrow & \CFN \\
    & T & \longmapsto & \eta_T \end{array} $$
     




Remarquons  que l'existence d'une métrique hyperbolique transverse est équivalente à l'existence d'un point fixe pour $\beta$. Sachant que $\CFN$ est un convexe  {\it compact} pour la topologie faible,  ce point fixe sera produit par le théorème de  Leray-Schauder-Tychonoff une fois que nous aurons montré que $\beta$ est  {\it continue}. C'est effectivement ainsi que l'on procède dans  \cite{to}(D\'emonstration du lemme 3.1, p.371)  dans un cadre un peu plus général puisqu'on y manipule également des feuilletages singuliers.  On rappelle quelle est l'idée de la démonstration: Soit  $(T_p)\in  
{\CFN}^{\mathbb N}$ une suite convergeante vers  $T$ telle que  $(\beta(T_p))$ est convergente. On doit vérifier que
$$\lim\limits_{p\to +\infty} \beta(T_p)=\beta(T).$$
 Sur un ouvert $U$ de $X$ isomorphe à un polydisque, on peut écrire que  $T_p=\frac{i}{\pi}\ddbar\varphi_p^U$ où chaque $\varphi_p^U\in PSH(U)$. Soit $\mathcal U$ un recouvrement de $X$ par un nombre fini de tels ouverts. Sur chaque ouvert $U$ de $\mathcal U$, le feuilletage est définie par une forme intégrable $\omega_U$ ne s'annulant pas. Ces formes se recollent sur les intersections via le cocycle $g_{UV}\in {\mathcal O}_ {U\cap V}^*$:
   
   $$\omega_U=g_{UV}\omega_V$$
   
   qui représente le fibré normal $\norm$. Pour chaque $p$, on peut de plus choisir les $\varphi_{p}^U,U\in\mathcal U$ de telle sorte que
   
   $$ \mbox{Max}_{U\in\mathcal U}\ \mbox{Sup}_{x\in U}\varphi_{p}^U(x)=0\ ,\ \varphi_{p}^{U}-\varphi_{p}^{V}=-\log |g_{UV}|$$
   
   On obtient  donc que $$ \beta(T_p)=e^{2\lambda_p}\frac{i}{\pi}e^{2\varphi_p^U}{\omega^U\wedge\overline{\omega^U}}$$   
   
   où $\lambda_p$ est une constante réelle.

 En utilisant  \cite{ho} (théorème 4.1.9), on peut alors supposer, quitte à extraire une sous-suite, que  $(\varphi_p^U)$ est uniformément majorée sur les  compacts de $U$ et  converge in $L_{loc}^1 (U)$ vers  une   fonction $\varphi^U\in PSH(U)$ qui satisfait nécessairement

$$\frac{i}{\pi}\ddbar\varphi^U=T.$$

Par le théorème de convergence dominée, on obtient que 

$$e^{2\varphi_p^U}{\omega^U\wedge\overline{\omega^U}}$$ converge dans $L_{ loc}^1(U)$ vers 

$$e^{2\varphi^U}{\omega^U\wedge\overline{\omega^U}}.$$ 
 En particulier, la suite $(\lambda_p)$ est bornée; on peut donc la supposer convergente vers $\lambda\in\R$. 
 
  On peut alors conclure de même que  
 
 $$e^{2\lambda_p}\frac{i}{\pi}e^{2\varphi_p^U}{\omega^U\wedge\overline{\omega^U}}$$ converge dans $L_{ loc}^1(U)$ vers 
 
 $$e^{2\lambda}\frac{i}{\pi}e^{2\varphi^U}{\omega^U\wedge\overline{\omega^U}}$$
 Ce dernier terme est donc un élément de $\CFN$ qui n'est rien d'autre que $\beta(T).$

On a donc $$\lim\limits_{n\to +\infty}\beta(T_p)=\beta(T).$$

\end{proof}

\subsection{Le cas $\norm$ pseudo-effectif}\label{npsef}
   \begin{definition}
   Soit $T\in \CF$. On dira que $T$ est résiduel si tous ces nombres de Lelong sont nuls en codimension $1$. 
   \end{definition}
   
 \begin{rem}
 Puisqu'on peut choisir les potentiels locaux de $T$ constants sur les feuilles (et donc ne dépendant que d'une variable), on constate facilement que $T$ est résiduel si et seulement si tous ces nombres de Lelong sont nuls.
 
 \end{rem}
 
 \begin{rem}
 Soit  $\mathcal P$ l'ensemble des diviseurs premiers de $X$. Soit $T\in \CF$; on note $\lambda_D\geq 0$ le nombre de Lelong de $T$ sur $D\in \mathcal P$ et $[D]$ le courant d'intégration sur $D$ . Notons que $[D]$ est $\F$-invariant (et en particulier lisse) dès que $\lambda_D$ et que de plus $\F$ est une fibration holomorphe à fibres connexe si le nombre de tels diviseurs $D$ est infini (cf.\cite{ghysjoua}).

 Remarquons également que la décomposition de Siu de $T$ (valable plus généralement pour tout courant positif fermé) s'écrit dans notre cas
 
 $$T= \sum_{D\in\mathcal P} \lambda_D [D] + R$$
 
 où $R\in\CF$ esr résiduel.
  \end{rem}
  

Compte-tenu du lemme \ref{classesproportionnelles}, de \ref{conpseudo} et \ref{c1=0}, il reste à considérer le cas ou $\norm$ est pseudo-effectif et non numériquement trivial ($c_1(\norm)\not=0)$.

\begin{prop}
Soit $\F$ un feuilletage holomorphe régulier sur $X$ Kähler compacte. On suppose que  $\CF\not=0$  mais que $\F$ n'admet pas d'hypersurface invariante; alors $\F$ admet une métrique hermitienne transverse de courbure constante invariante par holonomie . Il s'agit plus précisément d'une métrique sphérique  lorsque $\con$ n'est pas pseudo-effectif.
\end{prop}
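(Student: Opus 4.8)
The plan is to reduce, via the preceding results, to the single genuinely new situation and to treat it separately. By the Corollary above, $\norm$ or $\con$ is pseudo-effective. If $c_1(\norm)=0$, the theorem of \S\ref{c1=0} already gives an $\F$-invariant transverse euclidean metric, which has constant (zero) curvature. If $c_1(\norm)\neq 0$ and $\con$ is pseudo-effective, the theorem of \S\ref{conpseudo} gives an $\F$-invariant transverse hyperbolic metric, again of constant curvature. Moreover $\norm$ and $\con$ cannot both be pseudo-effective when $c_1(\norm)\neq 0$: writing $c_1(\norm)=\{S\}$ and $c_1(\con)=-c_1(\norm)=\{S'\}$ with $S,S'\geq 0$ closed, the relation $\int_X(S+S')\wedge\theta^{n-1}=0$ forces $S=S'=0$. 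So the only case left is that $\con$ is not pseudo-effective, in which $\norm$ is pseudo-effective with $c_1(\norm)\neq 0$; this is exactly the range in which the statement predicts a \emph{spherical} metric, so it suffices to treat it.

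In that case I would start from the hypothesis $\CF\neq 0$ and pick $T\in\CF\setminus\{0\}$. By Lemme~\ref{classesproportionnelles}, $c_1(\norm)=\lambda\{T\}$; since $\{T\}$ is pseudo-effective, $\lambda\neq 0$ (else $c_1(\norm)=0$) and $\lambda>0$ (else $c_1(\con)=|\lambda|\{T\}$ would be pseudo-effective). Rescaling $T$ by a positive constant, I may assume $T\in\CF$, $T\geq 0$, $\{T\}=c_1(\norm)$. By Siu's decomposition, $T=\sum_{D\in\mathcal P}\lambda_D[D]+R$ with $R\in\CF$ residual; each $D$ with $\lambda_D>0$ is an $\F$-invariant hypersurface, of which there are none, so $T=R$ is residual — all its Lelong numbers vanish, and for the same reason every element of $\CF$ is residual. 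A singular metric on $\norm$ with curvature current $T$ then has plurisubharmonic local weight $\varphi$ (since $T\geq 0$), $\frac{i}{\pi}\partial\overline\partial\varphi=T$, with no logarithmic pole; the induced metric on $\con$ has the non-plurisubharmonic weight $-\varphi$, and the hypothesis that $\con$ is not pseudo-effective means this weight cannot be lowered to a plurisubharmonic one — a rigidity I plan to exploit.

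To produce the spherical metric I would run a fixed-point scheme modelled on \S\ref{conpseudo}. Let $\mathcal K$ be the set of $S\in\CF$ with a fixed cohomology class (a suitable positive multiple of $c_1(\norm)$) and fixed mass $\int_X S\wedge\theta^{n-1}=c$; as in \S\ref{conpseudo} it is convex and weakly compact, and it is nonempty. To $S\in\mathcal K$, with plurisubharmonic weight $\varphi_S$ (so $\frac{i}{\pi}\partial\overline\partial\varphi_S=S$ and $e^{-2\varphi_S}$ is a local metric on $\norm$), I associate the positive $(1,1)$-form $\Lambda(S)$ equal, near a regular point where $\F=\{\omega=0\}$ for a nowhere-zero holomorphic $\omega$, to $\frac{i}{\pi}e^{-2\varphi_S}\,\omega\wedge\overline\omega$ — the analogue of Brunella's $\eta_T$, built now from the metric on $\con$ of weight $-\varphi_S$. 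Granting that $\Lambda(S)\in\CF$ (so that, being a nonzero element of $\CF$, its class is automatically a positive multiple of $c_1(\norm)$), normalising $\Lambda(S)$ by the mass condition forces $\{\Lambda(S)\}=\{S\}$, so $\Lambda$ becomes a self-map of $\mathcal K$, continuous by the argument of \S\ref{conpseudo} (compactness of families of plurisubharmonic functions, \cite{ho} Th.~4.1.9, and dominated convergence). Leray-Schauder-Tychonoff then yields a fixed point $S_0=\Lambda(S_0)$; the fixed-point relation reads, in a transverse coordinate, $\partial\overline\partial\varphi_{S_0}=e^{-2\varphi_{S_0}}\,\omega\wedge\overline\omega$, an elliptic Liouville equation of positive Gauss curvature. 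Elliptic regularity makes $\varphi_{S_0}$ smooth, so $S_0$ is a smooth nowhere-vanishing $\F$-invariant $(1,1)$-form, i.e. a transverse hermitian metric, with curvature $\rho_{S_0}=\frac{i}{\pi}\partial\overline\partial\varphi_{S_0}=S_0$: it is spherical.

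The step I expect to be the real obstacle is the one granted above: showing that $\Lambda(S)$ is $\F$-invariant, i.e. that in a local trivialisation $e^{-2\varphi_S}$ depends (up to a pluriharmonic factor) only on the transverse variable. In \S\ref{conpseudo} the corresponding fact uses that the exponential of a plurisubharmonic function is plurisubharmonic, which makes $-\partial\overline\partial(\eta_T\wedge\theta^{n-2})$ a positive measure of zero total mass, hence zero; here the relevant exponent $-\varphi_S+\log|f|$ has anti-plurisubharmonic part $-\varphi_S$, so $-\partial\overline\partial(\Lambda(S)\wedge\theta^{n-2})$ has no definite sign and that argument collapses. Replacing it needs a new input: one should use that the non-pseudo-effectivity of $\con$ forces the weight $-\varphi_S$ to be extremal, and combine this with the vanishing Lelong numbers of $S$ to pin $\varphi_S$ down to the transverse variable alone. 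A possible alternative is to bypass the fixed point and show directly that the residual, holonomy-invariant current $T$ is already absolutely continuous with smooth positive density — which, in a transversely projective description, amounts to uniqueness of the invariant measure (the round area form) for a monodromy group contained in $PSU(2)$. Establishing one of these is, I expect, the core difficulty.
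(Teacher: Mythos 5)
Your overall architecture is the paper's: reduce to the case where $\norm$ is pseudo-effective with $c_1(\norm)\neq 0$ and $\con$ is not, normalise an invariant current $T$ so that $\{T\}=c_1(\norm)$, observe via Siu that the absence of invariant hypersurfaces forces all Lelong numbers to vanish, and run a Leray--Schauder--Tychonoff fixed-point argument for $S\mapsto\Lambda(S)=\frac{i}{\pi}e^{-2\varphi_S}\,\omega\wedge\overline{\omega}$ on the weakly compact convex set of normalised invariant currents. But you have located the difficulty in the wrong place, and the step you wave through with ``dominated convergence'' is precisely the one that fails as stated. The $\F$-invariance of $\Lambda(S)$, which you single out as ``the real obstacle'' requiring new input, needs none: your $S$ already lies in $\CF$, so its local potential $\varphi_S$ may be chosen constant on the leaves from the outset --- this is exactly what distinguishes the present case from \S\ref{conpseudo}, where the representative of $c_1(\con)$ was \emph{not} a priori invariant and Brunella's psh-exponential trick was needed. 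Writing $\omega=g\,dz$ with $g$ a unit in a distinguished chart, the transverse derivatives of $e^{-2\varphi_S}$ are killed by $dz\wedge d\overline{z}$, so $i\ddbar\Lambda(S)=-\frac{1}{\pi}e^{-2\varphi_S}\,dg\wedge d\overline{g}\wedge dz\wedge d\overline{z}$: the sign of the exponent is irrelevant, since what survives is the positive factor $e^{-2\varphi_S}$ times a $(2,2)$-form of constant sign. This $\ddbar$-exact current of constant sign vanishes by Stokes on the compact K\"ahler $X$, hence $|g|$ is constant on the leaves and $\Lambda(S)\in\CF$. No extremality of the weight, and no uniqueness of $PSU(2)$-invariant measures, is needed.

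The genuine analytic point is the continuity of $\Lambda$. With the normalisation $\sup_U\varphi_p^U=0$ the potentials are bounded \emph{above}, so $e^{+2\varphi_p^U}$ admits an integrable majorant (that is why dominated convergence works in \S\ref{conpseudo}, where the exponent is $+2\varphi$), but $e^{-2\varphi_p^U}$ is bounded below by $1$ and unbounded above: dominated convergence gives nothing, and your continuity claim is unsupported. The paper obtains $e^{-2\varphi_p^U}\to e^{-2\varphi^U}$ in $L^1_{loc}$ from the Demailly--Koll\'ar semicontinuity theorem for complex singularity exponents (\cite{deko}, main theorem 0.2, item (2)), which applies precisely because the limit potential has zero Lelong numbers --- and \emph{this} is where the hypothesis ``no invariant hypersurface'' is actually consumed, via the local integrability of $e^{-\lambda\varphi}$ for all $\lambda$ recorded in the paper's Remarque \ref{int�grable}. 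You correctly established that every element of $\CF$ is residual, but then never used it; without Demailly--Koll\'ar (or an equivalent uniform-integrability statement) the map $\Lambda$ is not known to be continuous, and the fixed point producing the spherical metric does not follow.
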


\begin{proof}

Par définition, il existe  un courant positif fermé $T\in \CF$ dont la classe de cohomologie $\{T\}$ est égale à $c_1(\norm)$. Quitte à multiplier par un réel positif, on peut supposer que $\{T\}=c_1(\norm)$.  

Comme précédemment, on récupère donc  
 $h$ une métrique sur $\norm$ dont le poids local $\varphi$ est un potentiel local {\it psh} de $T$ et peut être choisi de plus localement constant sur les feuilles.
 
 \begin{rem}\label{intégrable}
 Puisque $T$ ne dépend que d'une seule variable, il ne peut admettre de nombre de Lelong strictement positif en l'absence d'hypersurface invariante. dans cette situation $e^{-\lambda\varphi}$ est localement intégrable pour toute constante $\lambda\in\R$ (cf. par exemple \cite{de1}).
\end{rem}


Cette métrique $h$ est donc incarnée par la $(1,1)$ forme positive à coefficients $L_{loc}^1$ (unique modulo constante multiplicative)

$$\eta_T=\frac{i}{\pi}e^{-2\varphi} \omega\wedge{\overline \omega}$$

\noindent où $\omega$ est une $1$ forme holomorphe non dégénérée définissant localement $\F$.

Soit $z$ une coordonnée locale  transverse au feuilletage dans laquelle  dans laquelle $\omega=gdz$ où $g$ est une unité.  On peut donc écrire 

   $$\eta_T=\frac{i}{\pi}e^{-2\varphi} {|g|}^2 dz \wedge d{\overline z}$$

   On peut estimer le $\ddbar$ de $\eta_T$ comme courant. En gardant à l'esprit que $\varphi$ est constant sur les feuilles , on obtient que 
   
   $$i\ddbar \eta_T= - \frac{1}{\pi}e^{-2\varphi} dg\wedge d{\overline g}\wedge dz\wedge d{\overline z}.$$
   
   Il s'agit d'un $(2,2)$ courant positif  $\ddbar$ exact et donc nul puisque la variété ambiante est Kähler compacte. Il s'ensuit que $g$ est également constant sur les feuilles et que finalement $\eta_T$ est fermé et appartient à $\CF$. Comme précédemment, définissons $\CFN:=\{T\in \CF| \{T\}=c_1(\norm\})$ et remarquons que $\eta_T$ est unique dès lors qu'on s'est fixé la normalisation $\eta_T\in \CFN$. Là encore, le résultat sera établi dès que l'application
   
   $$ \begin{array}{lrcl}
\beta : & \CFN & \longrightarrow & \CFN \\
    & T & \longmapsto & \eta_T \end{array} $$
    
    admet un point fixe et l'existence de ce dernier est assuré si on réussi à montrer que $\beta$ est continue (pour la topologie faible).
    
    On procède de façon similaire à la preuve du théorème \ref{conpseudo}. En adoptant les mêmes notations, on obtient ainsi sur chaque ouvert $U$ du recouvrement une suite de fonctions $(\varphi_p^U)\in {PSH (U)}^{\mathbb N}$ telle que 
    
    \begin{enumerate}[-]
    \item $ \beta(T_p)=e^{2\lambda_p}\frac{i}{\pi}e^{2\varphi_p^U}{\omega^U\wedge\overline{\omega^U}}$   
   où $\lambda_p$ est une constante réelle.
   \item $(\varphi_p)$ converge in $L_{loc}^1 (U)$ vers  une   fonction $\varphi^U\in PSH(U)$ qui satisfait nécessairement  $\frac{i}{\pi}\ddbar\varphi^U=T.$    
    \end{enumerate}


Pour conclure, il suffit encore d'établir que

$$e^{2\varphi_p^U}{\omega^U\wedge\overline{\omega^U}}$$ converge dans $L_{ loc}^1(U)$ vers 

$$e^{2\varphi^U}{\omega^U\wedge\overline{\omega^U}}.$$ 

Ce dernier point résulte de la remarque \ref{intégrable} et de \cite{deko} (main theorem 0.2, item (2)). 
\end{proof}

\subsection{Fin de la preuve du théorème principal}
 
 Comme on l'a vu, le fait que $\con$ soit pseudo-effectif entraîne automatiquement que $\F$ est transversalement hyperbolique. Dans ce cas, le théorème principal de l'introduction est donc établi.
 
 Quand  $\norm$ est pseudo-effectif (l'autre cas à envisager en présence de courants positifs invariants), l'existence de métrique invariante n'est pas du tout assurée. Un exemple typique est un  feuilletage construit par suspension sur un ${\mathbb P}^1$ fibré $S$ au dessus d'une courbe projective $\mathcal C$ de genre $g\geq 2$ à partir de la donnée d'une représentation
 
    $$\rho: \pi_1 (\mathcal C) \rightarrow \mbox {Aut}\ {\mathbb P}^1=PSL(2,\mathbb C).$$
    
    Par transversalité à la fibre $F={\mathbb P}^1$, on a  $\norm . F=2$ et de plus ${\norm}^2=0$ (annulation de Bott). D'après \cite{friedman}, proposition 15 p.124,  ceci garantit que $\norm$ est pseudo-effectif.
    
  L'existence d'un courant positif invariant par $\F$ est ici équivalente à l'existence d'un courant positif sur ${\mathbb P}^1$ invariant sous l'action de $G=\rho(\pi_1(V)$. En choisissant convenablement  $\rho$, on peut facilement produire des exemples qui n'en possédent aucun. En choisissant $\rho$ de telle sorte que $G$ fixe 1 ou 2 points de ${\mathbb P}^1$, on aboutit a des situations le feuilletage comporte une ou deux courbes invariantes dont les courants d'intégration sont les seuls invariants par holonomie.
 
 On peut néanmoins espérer l'existence d'une telle métrique en présence d'un courant résiduel non trivial $R\in\CF$.
 
 C'est l'objet de l'énoncé qui suit
 
 \begin{thm}
 Soit $\F$ un feuilletage holomorphe régulier de codimension 1 sur $X$ Kähler compacte. On suppose de plus que $\con$ n'est pas pseudo-effectif et qu'il existe un courant résiduel non trivial $R\in \CF$. Alors $\F$ est transversalement hermitien, de plus, quitte à passer à un revêtement fini ramifié au dessus d'hypersurfaces invariants par $\F$ (qui n'affecte pas la régularité de $X$ et du feuilletage), on peut supposer  que cette métrique est sphérique.
 \end{thm}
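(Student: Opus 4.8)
The hypothesis that $\con$ is not pseudo-effective forces (via the Corollary and Lemma \ref{classesproportionnelles}) that $\norm$ is pseudo-effective with $c_1(\norm)\neq 0$, and the hypothesis gives a nontrivial residual current $R\in\CF$. The strategy is to run the same fixed-point machinery as in the previous proposition, but now on a suitable cover. First I would apply the Siu decomposition to a chosen $T\in\CF$ representing $c_1(\norm)$: write $T=\sum_{D\in\mathcal P}\lambda_D[D]+R'$ with $R'$ residual. Since $\F$ is regular, each $[D]$ with $\lambda_D>0$ is $\F$-invariant and smooth; the key point is that $D$ is then an \emph{invariant hypersurface}, and the holonomy of $\F$ around $D$ is (after normalization) periodic or at least linearizable, so one can perform a ramified cover branched over $\bigcup D$ killing the atomic part. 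I would invoke the stated fact that if there are infinitely many such $D$ then $\F$ is a holomorphic fibration with connected fibres (a case one disposes of directly, e.g.\ by averaging a current on the base), so we may assume finitely many invariant hypersurfaces $D_1,\dots,D_k$.

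**The cover and the reduced equation.** On $X$ the foliation is given by local $1$-forms $\omega_i$ with $\omega_i=g_{ij}\omega_j$, $(g_{ij})$ representing $\norm$. The local constant-on-leaves psh potential $\varphi$ of $T$ decomposes, near a generic point of $D_\ell$ with local transverse coordinate $z_\ell$ cutting out $D_\ell$, as $\varphi=\lambda_{D_\ell}\log|z_\ell|+(\text{bounded residual part})$. The plan is to choose a ramified cover $\pi:\widehat X\to X$, branched over $\bigcup D_\ell$ with branching orders $m_\ell$ chosen so that $m_\ell\lambda_{D_\ell}\in\mathbb Z$ (or more precisely so that the pulled-back conormal bundle absorbs the atoms), such that $\widehat X$ is again Kähler compact and smooth and $\widehat\F:=\pi^*\F$ is again regular — this last point requires that the ramification is transverse to $\F$, which holds precisely because the $D_\ell$ are $\F$-invariant. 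On $\widehat X$ the pulled-back residual current $\widehat R$ (or rather the residual part of $\pi^*T$) now has locally bounded potentials away from nothing, i.e.\ it is residual with no Lelong numbers, so Remark \ref{int�grable} applies: $e^{-\lambda\varphi}\in L^1_{loc}$ for all $\lambda$.

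**Running the fixed-point argument upstairs.** Now on $\widehat X$ one repeats verbatim the proof of the previous proposition: set $\widehat{\mathcal C}^+_{\widehat\F}(1):=\{S\in\widehat{\mathcal C}^+_{\widehat\F}\mid\{S\}=c_1(N_{\widehat\F})\}$, define $\beta:S\mapsto\eta_S$ where $\eta_S=\frac{i}{\pi}e^{-2\psi}\,\widehat\omega\wedge\overline{\widehat\omega}$ with $\psi$ the constant-on-leaves psh potential of $S$; the $\ddbar$-exactness plus Kähler-compactness argument shows $\eta_S$ is closed and $\F$-invariant, so $\beta$ is a self-map of the weakly compact convex set $\widehat{\mathcal C}^+_{\widehat\F}(1)$. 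Continuity of $\beta$ follows from Hörmander's compactness for psh functions together with \cite{deko} exactly as before, now legitimately because all Lelong numbers vanish upstairs. Leray–Schauder–Tychonoff then yields a fixed point, a closed invariant smooth metric $\eta$ with $\rho_\eta=\pm\eta_\eta$ or $0$; since $\con$ (hence $N_{\widehat\F}^*$) is not pseudo-effective while $N_{\widehat\F}$ is, the curvature sign is forced to be positive, i.e.\ the metric is spherical. Finally, pushing the structure back down (the original $\F$ is transversally hermitian because the transverse metric upstairs is invariant under the deck group, being canonical), one gets that $\F$ itself is transversally hermitian, spherical after the cover.

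**Main obstacle.** The delicate step is the construction of the ramified cover: one must simultaneously (i) choose branching orders making the atomic part of $\pi^*T$ disappear into an honest line-bundle twist, (ii) keep $\widehat X$ smooth and Kähler — this is where one needs the $D_\ell$ to be smooth (automatic, by regularity of $\F$, they are leaves) and in normal crossing position, possibly after a further blow-up which must itself be $\F$-adapted, and (iii) ensure $\widehat\F$ stays regular, which is the geometric heart: the cover must ramify in the transverse direction only, and this is exactly guaranteed by invariance of the $D_\ell$. Controlling the interaction of several invariant hypersurfaces (their mutual intersections, which are again invariant) is the bookkeeping one must get right; everything after the cover is a rerun of the arguments already in hand.
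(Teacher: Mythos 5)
Your reduction to the case where $\norm$ is pseudo-effective and non numerically trivial, with at least one invariant hypersurface, is correct, and a ramified cover followed by a spherical metric is indeed the target. But the two steps you yourself flag as delicate are genuine gaps, and they are exactly where the paper's proof does its real work. On the cover: you posit branching orders $m_\ell$ with $m_\ell\lambda_{D_\ell}\in\ZZ$, but the Lelong numbers $\lambda_{D_\ell}$ are arbitrary positive reals, so such $m_\ell$ need not exist; and even with integral branching data prescribed, a finite cover of $X$ ramified over $\bigcup D_\ell$ to those orders exists only if $\pi_1(X\setminus\bigcup D_\ell)$ has a finite quotient realizing the corresponding local monodromies --- a nontrivial obstruction (think of the teardrop orbifold). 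The paper earns this cover by a completely different route: an invariant transverse metric with continuous coefficients is first produced \emph{on $X$ itself}, Molino's structure theory then shows that $\F$ (when it is not a fibration) has at most two closed leaves $F_1,F_2$, the Hodge index theorem yields a numerical relation $n_1F_1\equiv n_2F_2$, the multivalued function $z=f_2^{1/n_1}/f_1^{1/n_2}$ has monodromy in $PSU(2,\CC)$, and Malcev's theorem supplies the finite quotient needed to build the cover. The branching orders come from this numerical relation, not from Lelong numbers of an auxiliary current.

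The fixed-point argument upstairs also does not run. The preimages of the invariant hypersurfaces are still $\widehat{\F}$-invariant, so the normalized cone on the cover still contains atomic currents on which $\beta$ is undefined ($e^{-2\psi}$ is not $L^1_{loc}$ there), and the subcone of residual currents, on which $\beta$ is defined, is not weakly compact: Lelong numbers can jump up under weak limits, so Leray--Schauder--Tychonoff does not apply. The paper sidesteps this entirely: it applies $\beta$ \emph{once} to the given residual current and invokes elliptic regularity of the Poisson equation to get a continuous invariant transverse hermitian metric; this already proves that $\F$ is transversalement hermitien and unlocks Molino, and the spherical metric is then written down explicitly as $\frac{i}{\pi}\,dz\wedge d\overline{z}/(1+|z|^2)^2$ rather than obtained as a fixed point. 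Two further cases you wave away each need a dedicated argument: the fibration case is not handled by averaging on the base (one needs orbifold uniformization of $\P^1$ with cone points and an explicit treatment of the bad orbifolds $p=1$ and $p=2$, $n_1\neq n_2$), and the case of a single closed leaf requires the logarithmic flat-connection and residue argument producing a representation into $\{-1,1\}$ and a double cover reducing to the two-leaf case.
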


 \begin{proof}
 D'après l'étude menée précédemment, le seul cas à considérer reste $\norm$ pseudo-effectif non numériquement trivial avec présence d'au moins une hypersurface invariante.

Examinons d'abord le cas ou toutes les feuilles de $\F$ sont fermées (par \cite{ghysjoua}, c'est par exemple le cas dès qu'il y en a une infinité) et où le feuilletage définit donc une fibration 

$$\rho:X\rightarrow S$$

 à fibres connexes au dessus d'une surface de Riemann $S$.  Soient $F_1,...F_p$ les fibres multiples de $\rho$ de multiplicités respectives $n_1,...,n_p$ avec $2\leq n_1\leq n_2\leq....\leq n_p$. Ceci induit sur $S$ une structure d'orbifold définie par les points multiples $(s_1,n_1),...,(s_p,n_p)$, $s_i=\rho (\mbox{Supp}\ F_i)$. Par partition de l'unité, on peut implanter sur $S$ une métrique orbifold $\nu$ compatible aux données orbifold précédentes. En particulier $\rho^*\nu$ est une métrique transverse à $\F$ invariante  par holonomie.
 
  L'hypothèse sur $\norm$ assure que $S\simeq{\mathbb P}^1$ et que sa caractéristique d'Euler orbifold
 
 $$\chi(S_{orb})=2 - \sum_i (1-\frac{1}{n_i })$$ est strictement positive. En effet, si $\chi_{orb}(S) \leq 0$, $S$ serait uniformisable (en tant qu'orbifold) et admettrait à ce titre une mérique orbifold euclidienne ou hyperbolique $h$. Par suite, $\rho^* h$ serait une métrique lisse sur $\norm$ à courbure $\leq 0$, ce qui est incompatible avec la positivité de $\norm$.
 
 On conclut donc que $\F$ admet une métrique hermitienne invariante de courbure $1$ dès que $S_{orb}$ est uniformisable. 
 
 Il reste à traiter le cas des mauvaises orbifold, lesquelles sont associées aux deux situations suivantes:
 
 \begin{enumerate}
 \item $p=1$ \label{p=1},
 
 \item $p=2$ avec $2\leq n_1<n_2$\label{p=2}.

\end{enumerate}

Dans le cas (\ref{p=1}), considérons une fibre $F$ de $\rho$ autre que $F_1$. Soient $f_1, f$ des sections respectives de $\mathcal O ({F_1}_{{}_{red}})$ et ${\mathcal O}(F)$ s'annulant sur ${F_1}_{{}_{red}}$ et $F$. En posant

$$z=\frac{{f}^{1/n_1}}{f_1}$$

on récupère sur $X$ la $(1,1)$ forme (à pôle sur $F$)

$$\eta=\frac{i}{\pi}\frac{dz\wedge d\overline{z}}{{(1+{|z|}^2)}^2}$$

  Il s'agit d'un courant  positif invariant par $\F$ qui peut s'interpréter comme une métrique "orbifold" sur $X$ de courbure positive constante.
  
  Soit 
  
  $$r: \pi_1(X\setminus F)\rightarrow PSU(2,\mathbb C)$$
  
  la représentation définie par la monodromie de l'intégrale première multivaluée $z$.
  
 On peut naturellement y associer un revêtement  galoisien fini $g:\tilde X\rightarrow X$ ramifiant exactement dessus de $F$ à l'ordre $n_1$. 

Par construction, $g^* \eta$ définit une métrique hermitienne invariante de $g^*\F$.

Le cas (\ref {p=2}) se traite similairement en notant cette fois 

  $$z=\frac{{f_2}^{1/n_1}}{{f_1}^{1/n_2}}$$  où les $f_i$ sont des sections de  ${\mathcal O} ({F_i}_{{}_{red}})$ s'annulant sur  ${F_i}_{{}_{red}}$. Le revêtement considéré ici ramifie au dessus de $\mbox{ Supp}\ F_1$ et $\mbox{Supp}\ F_2$ aux ordres respectifs $n_1$ et $n_2$.
  \vskip 5 pt
  
  On suppose dorénavant que $\F$ nest pas une fibration.

 On reprend les notations de la section \ref{npsef} et l'on introduit le sous-ensemble $\CFR\subset \CFN$ formé des courants résiduels de $\CFN$.  Remarquons que l'application $\beta$ considérée dans ladite section est bien définie sur $\CFR$ et que $\beta (\CFR)\subset \CFR$. Soit $T\in \CFR$. Par les propriétés de régularités classiques de l'équation de Poisson $\Delta u=f$  (le fait  que $u$ gagne deux crans supplémentaires de différentiabilité par rapport à $f$), on constate que $\beta (T)$ définit une métrique hermitienne transverse $\F$ invariante à coefficients {\it continus} (et même de classe ${\mathcal C}^{2-\varepsilon}$).  
 
 Soit $H$ une hypersurface invariante et $\rho: \pi_1(m, X)\rightarrow \mbox{Diff} (\C, 0)$ sa représentation d'holonomie, cette dernière étant évaluée sur un germe de  tranversale $(T,m)$ au feuilletage .  Soit $G$ l'image de cette représentation.
 
  L'existence de $\beta (T)$ entraîne  facilement que $G$ est analytiquement linéarisable et plus précisément conjugué à un sous-groupe du groupe des rotations $R= \{h_\theta (z)=e^{2i\pi \theta} z, \theta\in\mathbb R\}$. 
  
En particulier, ${|z|}^2$ s'étend par holonomie en une fonction $f$ définie au voisinage de $H$,  constante sur les feuilles  et valant $0$ sur $H$. Soit $\varepsilon>0$ et  $\phi:\mathbb R\rightarrow[0,+\infty[$ une fonction lisse, $\phi (1)=1$ et dont le support est  contenu dans  $[1-\varepsilon,1+\varepsilon]$. Posons $\Psi= \phi\circ e^ f$. Pour $\varepsilon$ suffisamment petit, $\Psi$ a un sens et définit une fonction dans  ${\mathcal C}^\infty (X)$ constante sur les feuilles. 

De même , $T\in \CF$  où $T$ est la $(1,1)$ forme lisse définie par 

$$ T=i\Psi dz\wedge d\overline{z}.$$

On peut évidemment supposer que $T\in \CFN$.

On obtient donc que  $\beta (T)$ est  une métrique hermitienne transverse invariant par holonomie. Le feuilletage réel (de codimension $2$) sous-jascent à $\F$ est en particulier transversalement Riemannien; sachant que $\F$ n'est pas une fibration et admet au moins une hypersurface invariante, on conclut, suivant \cite{mo} , que $\F$ possède au plus $2$ feuilles fermées dont nous noterons $H$ la réunion, les autres feuilles s'accumulant sur des hypersurfaces analytiques réelles compactes de $X\setminus H$.
Supposons dans un premier temps  que $H=F_1\cup F_2$ est {\it exactement} une union de 2 feuilles fermées.  Par le théorème de l'indice de Hodge, il existe des entiers positifs $m,n$, qu'on peut supposer premiers entre eux, tels que numériquement on ait  $n_1F_1= n_2F_2$. On procède alors comme précédemment en posant  $$z=\frac{{f_2}^{1/n_1}}{{f_1}^{1/n_2}}$$  

Il s'agit d'une fonction multivaluée dont le module est en revanche bien défini. Par le principe du maximum et compte-tenu de la dynamique de  $\F$ décrite ci-dessus, $|z|$ et donc $z$ sont constants sur les feuilles. 

Comme avant, on hérite de la métrique orbifold  invariante à courbure positive constante

$$\eta=\frac{i}{\pi}\frac{dz\wedge d\overline{z}}{{(1+{|z|}^2)}^2}$$

qui est par ailleurs une vraie métrique dès lors que $n_1=n_2=1$.

On supposera par la suite $n_1\not=n_2$.

Soit $G$ l'image de la représentation 

$$r: \pi_1(X\setminus( F_1\cup F_2))\rightarrow PSU(2,\mathbb C)$$ 

associée à la monodromie de $z$.

Par le théorème de Malcev, il existe dans $G=\mbox{Im}\ r$ un sous groupe normal $H$ d'indice fini sans torsion. 
 Ceci assure à nouveau l'existence d'un revêtement Galoisien fini  $g:\tilde X\rightarrow X$ ramifiant exactement au dessus de $F_1$ et $F_2$ aux ordres respectifs  $n_2$ et $n_1$. La métrique lisse $g^*\eta$ satisfait donc les propriétés requises.
 \vskip 10 pt
 
 Le cas où $H$ est réduit à une seule feuille se ramène facilement au précédent. En effet, d'après \cite{mo}, le pseudo-groupe d'holonomie $G$ du feuilletage est un pseudo-groupe de Lie d'isométries dont l'algèbre de Lie $\mathfrak{G} $ est décrite  un faisceau localement  de champs basiques de Killing. Compte-tenu du fait que $\F$ est transversalement holomorphe et de sa dynamique, $\mathfrak {G}$ est de dimension un réelle et est localement engendré par la partie réelle d'un champ de vecteurs holomorphe $\mathcal V$ qui dans une coordonnée locale appropriée s'écrit 
 
 \begin{enumerate}
 \item $\mathcal V=\frac{\partial }{\partial z}$ en dehors de $H$
 \item $\mathcal V=iz\frac{\partial }{\partial z}$ au voisinage de $H=\{z=0\}$.
 \end{enumerate} 
 
 Soit $\rho:\pi_1(X)\rightarrow {\mathbb R}^*$ la représentation de monodromie associée à $\mathfrak G$ (vu comme système local de rang $1$) et soit $E$ le fibré en droite plat sous-jascent. Par dualité, le feuilletage est définie par une forme logarithmique  $\omega$ à valeurs dans $E^*$ dont  le diviseur des  pôles est exactement $H$ et telle que $\nabla_{{\rho}^*}\omega=0$ où $\nabla_{{\rho}^*}$ est la connexion plate associée à la représentation duale $\rho^*$. En particulier, $\rho$ ne peut être triviale en vertu du théorème des résidus. Comme $X$ est Kähler, $E^*$ admet une unique connexion plate unitaire $\nabla_u$ et on peut vérifier qu'on a encore $\nabla_u (\omega)=0$. Pour montrer cela, on peut utiliser le même type d'argument que celui développé dans \cite{brmen} (correspondant à $E$ trivial):  Puisque le résidu de $\omega$ le long de $H$ est une section de $E^*$ (en particulier ce dernier est trivial en restriction à $H$), on constate que $\nabla_u\omega_0$ est une {\it 2 forme  holomorphe} à valeur dans $E^*$. Puisque $E^*$ est unitaire, la quantité $\Omega=\nabla_u\omega\wedge \overline{\omega}$ est bien défini en tant que $(2,1)$ forme intégrable. Par la règle de Leibnitz, la composante de bidegré $(2,2)$ de sa différentielle $d\Omega$ (au sens des courants) vaut
 
 $$ {(d\Omega)}^{2,2}= \nabla_u\omega\wedge\overline{\nabla_u\omega }$$
 
 Par ailleurs, en choisissant une forme de Kähler $\theta$ sur $X$, on obtient par la formule de Stokes que 
 
 $$\int {(d\Omega)}^{2,2}\wedge\theta^{n-2}=\int d(\Omega\wedge \theta^{n-2})=0 .$$
 
 Par positivité, on conclut bien que   $\nabla_u\omega=0$.   
 
 Ainsi, la différence $\nabla_{{\rho}^*}-\nabla_u$ est une forme holomorphe (et donc fermée) définissant le feuilletage. Puisque $H$ est la seule feuille fermée, cette forme est nécessairement triviale, ce qui veut dire que $\rho$ est une représentation unitaire à valeurs dans $\{-1,1\}$. Il est par ailleurs facile de constater que la représentation induite sur un petit voisinage de $H$ est triviale. Le feuilletage induit sur le revêtement double associé à $\rho$ admet par conséquent deux hypersurfaces invariante et on conclut comme précédemment.
 
 \end{proof}
 
 \begin{rem}
 
 Comme ci-dessus, on pourrait utiliser l'existence d'un faisceau localement constant de champs de Killing transverses décrivant la dynamique d'un feuilletage riemannien pour conclure en toute généralité à l'existence d'une métrique transverse à courbure constante pour $\F$. Ceci permet de s'affranchir de la méthode de point fixe décrite dans la section \ref{métriquestransverses}. 
 
Dans ce cas,  pour établir que $\F$ est riemannien sous la seule hypothèse de la donnée d'un courant invariant diffus $T$, on montre d'abord, comme on l'a fait en section \ref{métriquestransverses} qu'à normalisation près, $T$ est la forme de courbure d'une métrique hermitienne transverse invariante à coefficients continus (par régularité du laplacien). Par \cite{asu}, Corollary 4.23, ceci suffit à prouver l'existence d'une métrique invariante {\it lisse}.
 
 \end{rem}
 
 \section{Une remarque à propos des feuilletages singuliers}

 \subsection{Feuilletages à singularités hyperboliques}\label{hyperboliques}
 On s'intéresse au problème suivant. Soit $S$ une surface kählérienne munie d'un feuilletage holomorphe $\F$. On suppose de plus que le lieu singulier $\mbox{Sing}\ \F$ de $\F$ (un ensemble fini de points) est non vide et que toutes les singularités sont hyperboliques, c'est-à-dire définies localement par des formes du type $\omega=\lambda xdy+ydx$ avec $\lambda\in{\mathbb C}\setminus\mathbb R$.
 
 Suivant \cite{brcourbesentières}, tout courant  fermé positif diffus $T$ $\F$ invariant est supporté sur le complémentaire de $ \mbox{Sing}\ \F$. En reprenant la démonstration du lemme \ref{intersectionnulle}, on obtient que

 \begin{equation} \label{autointersection}
 {\{T\}}^2=0
 \end{equation}
 
 \subsection{Non existence de courants diffus invariants}
 
 Soit $\F$ un feuilletage satisfaisant les hypothèses ci-dessus. Nous ne connaissons pas d'exemples possédant un courant positif fermé diffus invariant (ou de façon équivalente de mesures positives diffuses invariantes).  C'est un fait bien connu sur le plan projectif ${\mathbb P}^2$. En effet, si $c\in H^{1,1}({\mathbb P}^2, \mathbb R) $ est non triviale, $c^2$ est non triviale, ce qui contredit (\ref{autointersection}) si $c=\{T\}$.\footnote{Plus généralement, Brunella a montré dans \cite{brinexistence} que cette propriété de non existence persistait pour les feuilletages en courbes à singularités hyperboliques dans ${\mathbb P}^n$, ce qui requiert des arguments bien plus subtils dès que $n>2$.} 
 
 On peut légèrement améliorer ce résultat en l'étendant au cadre suivant  
 
 \begin{thm}
Soit $\F$ un feuilletage holomorphe en courbes satisfaisant les hypothèses de \ref{hyperboliques}. On suppose de plus que ${N\F}^2\geq 0$. Alors $\F$ n'admet pas de mesure diffuse transverse invariante.
 
 \end{thm}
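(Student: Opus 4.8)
The plan is to argue by contradiction, reusing the cohomological mechanism of Lemmas~\ref{intersectionnulle} and~\ref{classesproportionnelles}, with the hypothesis ${\norm}^2\geq 0$ now playing the role of the Bott vanishing (\ref{annulationdebott}) available in the regular case. Assume $\F$ carries a nonzero diffuse transverse invariant measure; by Sullivan's correspondence it gives a current $T\in\CF\setminus\{0\}$ without atoms, and, $T$ being positive on the compact K\"ahler manifold $X$, one has $\{T\}\neq 0$. By the facts recalled in \S\ref{hyperboliques}, $\mathrm{Supp}\,T$ is a compact subset of the regular locus $U=X\setminus\mathrm{Sing}\,\F$, and $\{T\}^2=0$.

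First I would check that $\{T\}\cdot c_1(\norm)=0$. On $U$ the foliation is given by submersions $f_i\colon U_i\to\C$ with $df_i=g_{ij}df_j$ representing $\norm$, so there is, as in (\ref{chernclass}), a closed $2$-form $\eta=dg_i\wedge df_i$ on $U$ representing $c_1(\norm)_{|U}$ up to a nonzero constant. Since the local potentials of $T$ are constant along the leaves, $T$ is there a distributional multiple of $df_i\wedge d\overline{f_i}$, hence $T\wedge\eta=0$ on $U$; as $T$ is compactly supported away from the finite set $\mathrm{Sing}\,\F$, a Stokes/cutoff argument turns this into $\{T\}\cdot c_1(\norm)=0$.

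Next I would apply the Hodge index theorem~\ref{Hodge}: here $n=2$, so $q(c,c')=c\cdot c'$ has signature $(1,p-1)$. The class $\{T\}$ being pseudo-effective, nonzero and isotropic, $q$ restricted to the hyperplane $\{T\}^{\perp}$ is negative semidefinite with radical $\R\{T\}$. Since $c_1(\norm)\in\{T\}^{\perp}$ and ${\norm}^2=q(c_1(\norm),c_1(\norm))\geq 0$, we are forced to conclude ${\norm}^2=0$ and $c_1(\norm)=\lambda\{T\}$ for some $\lambda\in\R$ --- exactly the conclusion of Lemma~\ref{classesproportionnelles}, now obtained without an extra hypothesis.

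The last step, which I expect to be the real obstacle, is to contradict this with the presence of hyperbolic singularities. Near a singular point $p$, where $\F=\{\lambda_p\,x\,dy+y\,dx=0\}$ with $\lambda_p\in\C\setminus\R$, the multivalued first integral $xy^{\lambda_p}$ shows that the holonomy of $\F$ along either local separatrix is analytically linearizable, conjugate to a germ $s\mapsto\mu_p s$ with $|\mu_p|\neq 1$ (because $\lambda_p\notin\R$). Such a hyperbolic germ preserves no diffuse measure on a transversal, so $T$ must vanish near every local separatrix; equivalently, running the operator $\beta$ of \S\ref{npsef} on $U$ would produce a genuine $\F$-invariant transverse Hermitian metric on $U$ whose restriction to a transversal meeting a separatrix would be invariant under $s\mapsto\mu_p s$, which is impossible for a genuine Hermitian metric near a fixed point. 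One then closes the argument by feeding this local obstruction into the numerical data ${\norm}^2=0$, $c_1(\norm)=\lambda\{T\}$: via the Baum--Bott formula ${\norm}\cdot{\norm}=\sum_p\mathrm{BB}(\F,p)$ with $\mathrm{BB}(\F,p)=2-\lambda_p-\lambda_p^{-1}$, together with the classification of foliations by curves on compact K\"ahler surfaces whose normal or conormal bundle is pseudo-effective --- which restricts $\F$ to a fibration or a Riccati-type foliation, neither of which carries a hyperbolic singularity. In every case the existence of $T$ contradicts $\mathrm{Sing}\,\F\neq\emptyset$, which proves the theorem.
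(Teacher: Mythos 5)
Your cohomological setup coincides with the paper's: since $\mbox{Supp}\,T$ avoids $\mbox{Sing}\,\F$ you get $\{T\}\cdot c_1(\norm)=0$ exactly as in Lemma \ref{classesproportionnelles}, and combining this with $\{T\}^2=0$ from (\ref{autointersection}) and ${\norm}^2\geq 0$, the Hodge index theorem \ref{Hodge} forces ${\norm}^2=0$ and $c_1(\norm)=\lambda\{T\}$. Up to that point the proposal is correct and is what the paper does.

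The endgame, however, is where the proof actually lives, and yours does not close. The Baum--Bott identity ${\norm}^2=\sum_p \mathrm{BB}(\F,p)$ only says that a certain sum of the quantities $\lambda_p+\lambda_p^{-1}+2$ vanishes, which is perfectly compatible with all $\lambda_p\in\C\setminus\R$; no contradiction comes from there. The ``classification'' you then invoke --- pseudo-effective normal bundle forces a fibration or a Riccati foliation, neither of which has hyperbolic singularities --- is not a citable theorem, and its punchline is false: Riccati foliations routinely carry hyperbolic singularities (on an invariant fibre the eigenvalue ratio is $\log\alpha/2i\pi$ for the local monodromy multiplier $\alpha$, non-real as soon as $|\alpha|\neq 1$). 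The paper's actual mechanism uses the proportionality $c_1(\norm)=\lambda\{T\}$ \emph{constructively}: it endows $\norm$ with the singular metric of weight $\lambda\varphi$, where $\varphi$ is a leafwise-constant local potential of $T$, and considers the global $(1,1)$-form $\eta=\frac{i}{\pi}e^{-2\lambda\varphi}\,\omega\wedge\overline{\omega}$. Because $T$ is diffuse and supported off $\mbox{Sing}\,\F$, the potential $\varphi$ is pluriharmonic near the singular points, so $\eta$ is smooth there and has $L^1_{loc}$ coefficients globally; the $\ddbar$-positivity plus Stokes argument of Section \ref{conpseudo} then shows that $\eta$ is a closed positive diffuse $\F$-invariant current. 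But $\eta$ has support equal to all of $X$, in particular meeting $\mbox{Sing}\,\F$, contradicting the very localization result of Brunella that you used at the outset. You would need to supply this construction (or an equivalent global argument) to complete the proof.
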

 
 \begin{proof}
 Supposons par l'absurde qu'une telle mesure (vu comme courant)  $T$ existe. Puisque $T$ est supporté dans $X\setminus \mbox{Sing}\ \F$, on a (par exemple en reprenant la démonstration du lemme \ref{classesproportionnelles} )
 \begin{equation}\label{intersectioncourantnormal}
 \{T\}.c_1(N\F)=0.
 \end{equation}
  En combinant ceci  avec la propriété d'annulation \ref{autointersection} et ${N\F}^2\geq 0$, on conclut par le théorème de l'indice de Hodge que $c_1(N\F)=\lambda \{T\}$ avec $\lambda\in\mathbb R$.  On récupère donc (voir section \ref{métriquestransverses}) sur $N\F$ une métrique représentée par la $(1,1)$ forme singulière (globale)
  
  $$\eta=\frac{i}{\pi}e^{-2\lambda\varphi} \omega\wedge{\overline \omega}$$
  
  où $\varphi$ est un potentiel local de $T$ et $\omega$ une $1$ forme holomorphe engendrant localement $\F$.   En dehors de $\mbox{Sing}\ \F$, on peu choisir $\varphi $ constant sur les feuilles. Puisque $T$ est diffus et  $\eta$ est lisse au voisinage de $\mbox{Sing}\ \F$ (car $\varphi$ y est pluriharmonique), $\eta$ est à coefficients $L_{loc}^1$ et donc bien définie en tant que courant. Ceci permet encore de conclure que $ i\partial\overline{\partial}\eta$ est  positif sur $X\setminus\mbox{Sing}\ \F$ et donc sur $X$ tout entier par lissité aux singularités. Par Stokes, on en déduit de même que $\eta$ est un courant positif {\it fermé} invariant par $\F$; par ailleurs ce courant est diffus et de support $X$, ceci produit la contradiction recherchée.

 \end{proof}


 
\bigskip


\begin{thebibliography}{99}
\bibitem{asu} T.ASUKE, {\it A Fatou-Julia decomposition of transversely holomorphic foliations}, Tome 60, n° \textbf{3} (2010), p1057-1104.
\bibitem{brbir} M.BRUNELLA, {\it Birational geometry of foliations}, Publicações Matemáticas do IMPA, [IMPA Mathematical Publications] Instituto de Matemática Pura e Aplicada (IMPA), Rio de Janeiro, 2004.
\bibitem{brcourbesentières}  M.BRUNELLA, {\it Courbes entières et feuilletages holomorphes}, L'enseign. Math. \textbf{45}, 1999, 195-216.
 \bibitem{brsurf} M.BRUNELLA, {\it Feuilletages holomorphes sur les surfaces complexes compactes}, Ann. Sci. École Norm. Sup. (4)  {\bf 30}  (1997),  no. 5, 569-594. 
 \bibitem{brinexistence} M.BRUNELLA, {\it Inexistence of invariant measures for generic rational differential equations in the complex domain}, Bol. Soc. Mat .Mexicana (3) Vol. \textbf{12}, 2006.
 \bibitem{brmen} M.BRUNELLA, L.G.MENDES, {\it Bounding the degree of solutions to Pfaff equations}, Publications Matemàtiques, Vol.\textbf {44} (2000), 593-604.

149-156.

\bibitem{de} J.P DEMAILLY, {\it On the Frobenius integrability of certain holomorphic $p$-forms},  Complex geometry (Göttingen, 2000),  93-98, Springer, Berlin, 2002.
\bibitem{de1} J.P DEMAILLY, {\it $L^2$ vanishing theorems for positive line bundles and adjunction theory}, Transcendental methods in algebraic geometry.
Lectures given at the 3rd C.I.M.E. Session held in Cetraro, July 4-12, 1994. Edited by F. Catanese and C. Ciliberto. Lecture Notes in Mathematics, {\bf 1646} (1-97).

\bibitem{deko} J.P DEMAILLY, J.KOLL\'AR,
{\it Semi-continuity of complex singularity exponents and Kähler-Einstein metrics on Fano orbifolds}, Annales scientifiques de l'École Normale Supérieure, Sér. 4, {\bf 34} no. 4 (2001), p. 525-556 
\bibitem{friedman} R. FRIEDMAN, {\it Algebraic Surfaces and Holomrphic Vector Bundles}, Springer Verlag (1998).
\bibitem{ghysjoua} \'E. GHYS,{\it \`A propos d'un théorème de J.-P. Jouanolou concernant les feuilles fermées des feuilletages holomorphes}, Rendiconti del Circolo Matemetico di Palermo, Serie II, Tomo XLIX (2000), pp.175-180.
\bibitem{ghys} \'E. GHYS, {\it Flots transversalement affines et tissus feuilletés}, Mémoires de la S.M.F. 2$^e$ série, tome \textbf{46} (1991), p.123-150.
\bibitem{ho} L.HÖRMANDER,  {\it The analysis of linear partial differential operators. I. Distribution theory and Fourier analysis},  Classics in Mathematics. Springer-Verlag, Berlin, 2003. 
\bibitem{mo} P.MOLINO, {\it Riemannian foliations}, Translated from the French by Grant Cairns. With appendices by Cairns, Y. Carri\`ere, \'E. Ghys, E. Salem and V. Sergiescu. Progress in Mathematics, {\bf 73.} Birkh\"auser Boston, Inc., Boston, MA, 1988, 339 pp.
\bibitem{sullivan} D.SULLIVAN, {\it Cycles for the Dynamical Study of Foliated Manifolds and Complex Manifolds}, Inventiones math. \textbf{36}, 225-255 (1976).
\bibitem{to} F.TOUZET,{\it Uniformisation de l'espace des feuilles de certains feuilletages de codimension un}, Bull Braz Math Soc, New Series {\bf 44}(3) (2013), 351-391.
\end{thebibliography}
\end{document}